\newcommand{\balg}{\begin{algorithm}}
\newcommand{\ealg}{\end{algorithm}}
\newcommand{\br}{\begin{remark}}
\newcommand{\er}{\end{remark}}
\newcommand{\bex}{\begin{example}}
\newcommand{\eex}{\end{example}}
\newtheorem{theorem}{Theorem}[section]
\newtheorem{lemma}[theorem]{Lemma}
\newtheorem{proposition}[theorem]{Proposition}
\theoremstyle{definition}
\newtheorem{example}[theorem]{Example}
\newtheorem{algorithm}[theorem]{Algorithm}
\newtheorem{remark}[theorem]{Remark}
\numberwithin{equation}{section}
\begin{document}

\title{\textcolor[rgb]{1.00,0.00,0.00}{Interiors of completely positive cones}}


\author{Anwa Zhou}
\address{Department of Mathematics, Shanghai Jiao Tong University,
Shanghai 200240, P.R. China}
\email{congcongyan@sjtu.edu.cn}
\thanks{}

\author{Jinyan Fan$^*$}
\address{Department of Mathematics, and MOE-LSC, Shanghai Jiao Tong University,
Shanghai 200240, P.R. China}
\email{jyfan@sjtu.edu.cn}
\thanks{$^*$ The corresponding author. The work is partially supported by NSFC 11171217.}

\subjclass[2000]{Primary: 15A48, 65K05, 90C22, 90C26}

\date{}

\dedicatory{}

\begin{abstract}
A symmetric matrix $A$ is completely positive (CP) if there exists an entrywise
nonnegative matrix $B$ such that $A = BB^T$.
We characterize the interior of the CP cone.
A semidefinite algorithm is proposed for checking interiors of the CP cone, and its properties are studied.
A CP-decomposition of a matrix in Dickinson's form can  be obtained if it is an interior of the CP cone.
Some computational experiments are also presented.

\end{abstract}

\keywords{completely positive cone, interiors of CP cone,
linear optimization with moments, semidefinite algorithm}

\maketitle

\section{Introduction}
A real $n\times n$ symmetric matrix $A$ is   completely
positive (CP) if there exist nonnegative vectors $b_1,\cdots,b_m
\in \mathbb{R}^n$ such that
\begin{equation}\label{CPd}
A = b_1 b_1^T +\cdots +b_m b_m^T,
\end{equation}
where $m$ is called the  length of the decomposition
(\ref{CPd}). The smallest $m$ in the above is called the
 CP-rank of $A$. If $A$ is CP, we call (\ref{CPd}) a
 CP-decomposition of $A$. Clearly, $A$ is CP if and only if
$A = B B^T$ for an entrywise nonnegative matrix $B$.
Hence, a CP-matrix is not only positive semidefinite but also nonnegative entrywise.


Let $\mathcal{S}_n$ be the space of real  $n\times n$ symmetric matrices.
For a  cone $\mathcal{C}\subseteq \mathcal{S}_n$, its dual cone is defined as
$$\mathcal{C}^*:= \{G \in \mathcal{S}_n : \langle A,G\rangle \geq 0 \; \text{for all} \; A \in \mathcal{C}\},$$
where $\langle A,G\rangle=\text{trace}(AG)$ is the trace inner product.
Denote
\begin{align*}
& \mathcal{C}_n = \{A \in \mathcal{S}_n : A= BB^T \ \text{with}\ B\geq0 \}, \text{the completely positive cone},\\
& \mathcal{C}_n^* = \{ G \in \mathcal{S}_n: x^TGx\geq 0\ \text{for all}\ x\geq 0\},
\text{the copositive cone}.
\end{align*}
Both $\mathcal {C}_n$ and $\mathcal{C}_n^*$
are proper cones (i.e. closed, pointed, convex and
full-dimensional).  Moreover, they are  dual to each other \cite{Hall1967}.

 The completely positive cone and copositive cone have wide applications in mixed binary quadratic programming \cite{Burer},
standard quadratic optimization problems
and general quadratic programming \cite{Bomze}, etc.
Some NP-hard problems can also be formulated as linear optimization
problems over the CP cone and the copositive cone (cf. \cite{deKlerk}).
We refer to \cite{BermanN,Bomze1,Dur}
for the work in the field.

The membership problems for the completely positive cone and the copositive cone are NP-hard (cf. \cite{Dickinson11,AnstreicherB}). To compute a CP-decomposition of a CP-matrix is also hard. Dickinson \& D\"{u}r \cite{Dickinson2012} studied the CP-checking and
CP-decomposition of some sparse matrices.
Sponseldur \& D\"{u}r \cite {Sponseldur} used polyhedral
approximations to  project a matrix to  $\mathcal{C}_n$;
a CP-decomposition of a matrix can  be obtained if it is an interior of $\mathcal{C}_n$.
In \cite{ZhouF}, a semidefinite algorithm is proposed for solving the CP-matrix completion problem, which includes the CP-checking as a special case;
a CP-decomposition for a general CP-matrix can be found by the algorithm.

Denote int$(\mathcal{C}_n)$ and bd$(\mathcal{C}_n)$ the interior and the boundary of  $\mathcal{C}_n$, respectively.
Shaked-Monderer, Bomze, Jarre \&  Schachinger \cite{Shaked-MondererBomze} showed  that the maximal CP-rank of $n\times n$ CP-matrices is attained at a positive definite matrix on bd$(\mathcal{C}_n)$. Denote $\mathbb{R}^n_+ := \{x \in \mathbb{R}^n \mid x \geq 0\}$
and $\mathbb{R}^n_{++} := \{x \in \mathbb{R}^n \mid x > 0\}$.
D\"{u}r \& Still \cite{DurS} characterized  int$(\mathcal{C}_n)$ as:
\begin{align}
  \text{int}(\mathcal{C}_n)  = &\{BB^T \mid B=(B_1, B_2) \; \text{with}\;B_1>0\  \text{nonsingular}, \;  B_2 \geq 0\}\nonumber\\
     = & \left\{\left.\sum^{m}_{i=1} {b_i b_i^T} \right| \begin{array}{c}
                                             m\geq n, b_i \in \mathbb{R}^n_+ \;\text{for all} \; i, \\
                                             \{b_1,\ldots,b_n\}\subseteq \mathbb{R}^n_{++}, \\
                                            \text{span}\{b_1,\ldots,b_n\}=\mathbb{R}^n
                                          \end{array}\right\}.\label{Durform}
\end{align}
Dickinson \cite{Dickinson2010} further characterized  int$(\mathcal{C}_n)$ as:
\begin{align}
  \text{int}(\mathcal{C}_n)  = & \{BB^T \mid \text{rank} (B)=n, B=(b_1, \tilde B), b_1\in \mathbb{R}^{n}_{++}, \tilde B \geq 0\}\nonumber \\
     = & \left\{ \left.\sum^{m}_{i=1} {b_i b_i^T}
 \right| \begin{array}{c}
                                             b_1\in \mathbb{R}^{n}_{++}, b_i\in \mathbb{R}^n_+ \;\text{for} \; i=2,\cdots,m, \\
                                            \text{span}\{b_1,\cdots,b_m\}=\mathbb{R}^n
                                          \end{array}\right\}. \label{dickinsonform}
\end{align}
The above characterizations are very useful in checking interiors of $\mathcal{C}_n$.

How do we check whether a matrix is in the interior of $C_n$ if
it is not given in D\"{u}r \& Still's form \eqref{Durform} or  Dickinson's form \eqref{dickinsonform}?
Little is known for checking interiors or boundaries of  $\mathcal{C}_n$.
In this paper, we characterize interiors of $\mathcal{C}_n$ from the view of optimization.
A semidefinite algorithm is proposed to check
whether a symmetric matrix $A\notin \mathcal{C}_n$, or $A\in \text{bd}(\mathcal{C}_n)$, or $A\in \text{int}(\mathcal{C}_n)$.
If $A\notin \mathcal{C}_n$, we can get a certificate.
If $A\in \mathcal{C}_n$, we can get a CP-decomposition of $A$.
Moreover, a CP-decomposition in Dickinson's form can also be obtained by a similar algorithm.

The paper is organized as follows. In Section 2, we give a new sufficient and necessary condition
to characterize interiors of $\mathcal{C}_n$.
In Section 3, we formulate the problem of checking the membership and interiors of $\mathcal{C}_n$ as the linear optimization with moments.
 In Section 4, we present a semidefinite algorithm for the problem. Its basic properties are also studied.
Some computational results are reported in Section 5.
Finally in Section 6, we discuss how to give a CP-decomposition of a matrix in Dickinson's form if it is an interior of $\mathcal{C}_n$.


\section{A Characterization of interiors}

In this section, we characterize interiors of $\mathcal{C}_n$ from the view of optimization. 

\begin{lemma}\label{iffintDur} Suppose $A\in \mathcal{S}_n$. Then $A \in \text{int}(\mathcal{C}_n)$ if and only if
for some $C \in \text{int}(\mathcal{C}_n)$,
there exists a $\lambda>0$   such that $A- \lambda C \in \mathcal{C}_n$.
\end{lemma}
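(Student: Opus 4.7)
The plan is to prove both directions by essentially elementary convex-cone arguments, without invoking the explicit characterizations \eqref{Durform} or \eqref{dickinsonform}. Since $\mathcal{C}_n$ is a proper convex cone (closed, pointed, convex, full-dimensional), it enjoys the standard property that adding an interior element to any cone element yields an interior element; this is the engine of the proof.

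For the forward direction, I would simply take $C := A$, which lies in $\text{int}(\mathcal{C}_n)$ by hypothesis, together with $\lambda := 1/2$. Then $A - \lambda C = \tfrac{1}{2} A$, which belongs to $\mathcal{C}_n$ because $\mathcal{C}_n$ is closed under positive scaling. So the existence of such $C$ and $\lambda$ is immediate.

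For the reverse direction, suppose $C \in \text{int}(\mathcal{C}_n)$, $\lambda > 0$, and $A - \lambda C \in \mathcal{C}_n$. First note $\lambda C \in \text{int}(\mathcal{C}_n)$, since scaling by a positive constant is a homeomorphism of $\mathcal{S}_n$ that preserves $\mathcal{C}_n$ (hence its interior). Write $A = \lambda C + (A - \lambda C)$. Now I would invoke the general fact: in any convex cone $\mathcal{K} \subseteq \mathcal{S}_n$, if $x \in \text{int}(\mathcal{K})$ and $y \in \mathcal{K}$ then $x + y \in \text{int}(\mathcal{K})$. This follows because one chooses $\varepsilon > 0$ with the open ball $B_\varepsilon(x) \subseteq \mathcal{K}$, and then $B_\varepsilon(x + y) = y + B_\varepsilon(x) \subseteq \mathcal{K} + \mathcal{K} \subseteq \mathcal{K}$ by convexity/additive closure of the cone. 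Applying this with $x = \lambda C$ and $y = A - \lambda C$ gives $A \in \text{int}(\mathcal{C}_n)$, completing the proof.

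There is no real obstacle here; the only point that requires care is checking that $\lambda C \in \text{int}(\mathcal{C}_n)$ whenever $\lambda > 0$ and $C \in \text{int}(\mathcal{C}_n)$, and that the cone-interior-plus-cone-element lemma applies. Both are routine and do not depend on the detailed structure of $\mathcal{C}_n$ beyond its being a proper convex cone, so the lemma in fact holds verbatim for any proper convex cone in $\mathcal{S}_n$.
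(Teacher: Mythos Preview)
Your proof is correct, but both directions differ from the paper's approach. For the forward direction, you choose $C := A$, which suffices for the ``for some $C$'' in the statement; the paper instead shows that for \emph{any} $C \in \text{int}(\mathcal{C}_n)$ one can find $\lambda > 0$ with $A - \lambda C \in \mathcal{C}_n$, via an $\varepsilon$-ball argument (taking $\lambda = \delta/\|C\|$ where $B_\delta(A) \subseteq \mathcal{C}_n$). This stronger version is what the paper actually needs downstream, since the algorithm fixes a specific $C$ such as $I_n + E_n$. For the reverse direction, the paper invokes the explicit D\"ur--Still characterization \eqref{Durform}: it writes $C = (B_1,B_2)(B_1,B_2)^T$ with $B_1 > 0$ nonsingular and $A - \lambda C = B_3 B_3^T$, then concatenates to get $A = (\sqrt{\lambda}B_1, \sqrt{\lambda}B_2, B_3)(\sqrt{\lambda}B_1, \sqrt{\lambda}B_2, B_3)^T$, which again matches \eqref{Durform}. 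Your argument replaces this with the purely topological fact that $\text{int}(\mathcal{K}) + \mathcal{K} \subseteq \text{int}(\mathcal{K})$ for any convex cone $\mathcal{K}$, which is more elementary and shows the lemma holds verbatim for any proper convex cone, not just $\mathcal{C}_n$.
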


\begin{proof} Given $A\in \text{int}(\mathcal{C}_n)$, then there exists a $\delta>0$ such that
for any $D\in \mathcal{S}_n$ with $\|A-D\|\leq \delta$, we have $D\in \mathcal{C}_n$.
Choose an arbitrary $C \in \text{int}(\mathcal{C}_n)$.
Obviously, $C$ is positive and nonsingular.
Let $\lambda= \delta/\|C\|$.
Then $\|A-(A- \lambda C)\|\leq \delta$, which implies that $A- \lambda C\in \mathcal{C}_n$.

Conversely, suppose $C \in \text{int}(\mathcal{C}_n)$ and $ A- \lambda C \in \mathcal{C}_n$, where $\lambda>0$.
By \eqref{Durform}, there exist
$B_1>0$ nonsingular and $B_2 \geq 0$ such that $C=(B_1, B_2)(B_1, B_2)^T$.
Meanwhile, there exists a $B_3\geq 0$ such that $A- \lambda C=B_3B_3^T$. Hence,
$$
A=\lambda C+ B_3B_3^T=(\sqrt{\lambda}B_1,\sqrt{\lambda}B_2,B_3)(\sqrt{\lambda}B_1,\sqrt{\lambda}B_2, B_3)^T.
$$
So, by \eqref{Durform}, $A \in \text{int}(\mathcal{C}_n)$.
\end{proof}

Lemma \ref{iffintDur} gives an equivalent condition for a  matrix $A$
to be an interior of $\mathcal{C}_n$.
We wonder how to compute such a $\lambda$.
This can be done by solving the linear optimization problem:
\begin{equation*}
 (P_1):\qquad  \left\{ \begin{array}{lll}
  f_1^*=&\max  & \lambda\\
  &\mbox{s.t.} & A-\lambda C \in \mathcal{C}_n
 \end{array} \right.
\end{equation*}
for some given $C\in \text{int}(\mathcal{C}_n)$.
A simple choice of $C$ is $I_n+E_n$.
Here,  $I_n$ denotes the $n\times n$ identity matrix and $E_n$ the $n\times n$ matrix of all ones.
By Lemma \ref{iffintDur},
if $f_{1}^*> 0$, then  $A \in \text{int}(\mathcal{C}_n)$;
if $f_{1}^*= 0$, then  $A \in \text{bd}(\mathcal{C}_n)$;
if $f_{1}^*< 0$, then  $A \notin \mathcal{C}_n$.

Since $\mathcal{C}_n$ and $\mathcal{C}_n^*$ are dual to each other, we know
\begin{align}\label{notin1}
   A\notin \mathcal{C}_n \Longleftrightarrow  \exists X \in \mathcal{C}_n^* \; \text{such that}\; \langle A,X \rangle < 0.
\end{align}
Hence, $A\notin \mathcal{C}_n$ if and only if there exists $X \in \mathcal{C}_n^*$ satisfying
\begin{align}\label{notin2}
   \langle A,X \rangle<0, \quad \langle X, C \rangle=1.
 \end{align}
On the  other hand, as shown in \cite{Dickinson2010, Berman},
\begin{align}\label{int1}
  A \in \text{int}(\mathcal{C}_n)   \Longleftrightarrow  \langle A,X \rangle > 0\ \text{for all} \ X \in \mathcal{C}_n^* \setminus \{0\}.
  \end{align}
Hence, $A\in \mathcal{C}_n$ if and only if for all $X \in \mathcal{C}_n^*$ with $\langle X, C \rangle=1$,
     \begin{align}\label{int2}
  \langle A,X \rangle >0.
 \end{align}
Therefore, checking interiors of $\mathcal{C}_n$ is equivalent to solving
the   linear optimization problem over $\mathcal{C}_n^*$:
\begin{equation*}
(D_1):\qquad   \left\{ \begin{array}{lll}
  g_1^*=&\min  & \langle A,X \rangle\\
  &\mbox{s.t.} & \langle X, C\rangle =1\\
  &            & X \in \mathcal{C}_n^*.
 \end{array}\right.\qquad
\end{equation*}
By \eqref{notin2} and \eqref{int2}, if $g_{1}^*> 0$, then  $A \in \text{int}(\mathcal{C}_n)$;
if $g_{1}^*= 0$, then  $A \in \text{bd}(\mathcal{C}_n)$;
if $g_{1}^*< 0$, then  $A \notin \mathcal{C}_n$.

In fact, the optimization problems ($P_1$) and ($D_1$) are dual to each other.
Denote by Feas($P$) the feasible region of an optimization problem ($P$).
By the standard duality theory, we have  $g_1^* \geq f_1^*$  for all $X\in \text{Feas}(D_1)$ and $\lambda \in \text{Feas}(P_1)$.
This is referred to as weak duality.
If there exists a $\lambda \in \text{Feas}(P_1)$ such that $A-\lambda C \in \text{int}(\mathcal{C}_n)$, we say that Slater's condition holds for ($P_1$) and $\lambda$ is a strictly feasible point of ($P_1$).
If there exists a $X \in \text{Feas}(D_1) \cap \text{int}(\mathcal{C}_n^*)$,
we say that Slater's condition holds for ($D_1$) and $X$ is a strictly feasible point of ($D_1$).
Under Slater's conditions, strong duality holds (i.e. $g_1^* = f_1^*$).

The optimization problems ($P_1$) and  ($D_1$) have the following properties.

\begin{theorem}\label{strongdualP}
Suppose $A\in \mathcal{S}_n$ and $C \in \text{int}(\mathcal{C}_n)$.
Then the optimums of ($P_1$) and ($D_1$) are finite and equal,
and the optimal solution sets of ($P_1$) and ($D_1$) are nonempty.
Furthermore, if $f_1^*<0$,
then $A \notin \mathcal{C}_n$; if $f_1^*=0$,
then $A \in \text{bd}(\mathcal{C}_n)$;
 if $f_1^*>0$, then $A \in \text{int}(\mathcal{C}_n)$.
\end{theorem}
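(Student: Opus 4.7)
The plan is to recognize $(P_1)$ and $(D_1)$ as a dual pair of conic linear programs over the mutually dual cones $\mathcal{C}_n$ and $\mathcal{C}_n^*$, verify Slater's condition on both sides, and then invoke the strong duality theorem for conic programming to get $f_1^* = g_1^*$ with attainment on both sides. The sign classification of $A$ would then fall out of Lemma \ref{iffintDur} together with primal and dual attainment.

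For Slater on the dual side, I would take $X_0 := I_n/\operatorname{tr}(C)$. Since $x^T I_n x = \|x\|^2 > 0$ for every nonzero $x\geq 0$, the identity $I_n$ is strictly copositive and hence lies in $\text{int}(\mathcal{C}_n^*)$; furthermore $\langle X_0,C\rangle = 1$, so $X_0$ is strictly feasible for $(D_1)$. For Slater on the primal side, I would use that $\text{int}(\mathcal{C}_n)$ is open at $C$: choose $\delta>0$ with $C+D\in \text{int}(\mathcal{C}_n)$ whenever $\|D\|<\delta$, pick any $t>\|A\|/\delta$, and note that $A+tC = t(C + A/t)\in \text{int}(\mathcal{C}_n)$ by positive homogeneity, so $\lambda = -t$ is strictly feasible for $(P_1)$. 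Weak duality $g_1^*\geq f_1^*$ is immediate from $\langle A-\lambda C,X\rangle\geq 0$ combined with $\langle X,C\rangle=1$, and testing the two strictly feasible points against each other confirms that both optimums are finite. Invoking the standard conic duality theorem then delivers $f_1^* = g_1^*$ with attainment on both sides.

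For the ``furthermore'' part, let $(\lambda^*,X^*)$ be an optimal primal-dual pair. If $f_1^*>0$, then $A - \lambda^* C\in \mathcal{C}_n$ with $\lambda^*>0$, so Lemma \ref{iffintDur} (applied with this $\lambda^*$ and the given $C$) gives $A\in \text{int}(\mathcal{C}_n)$. If $f_1^* = 0$, primal attainment yields $A = A - 0\cdot C\in \mathcal{C}_n$; on the other hand $A$ cannot lie in $\text{int}(\mathcal{C}_n)$, for otherwise Lemma \ref{iffintDur} would produce some $\lambda>0$ with $A-\lambda C\in \mathcal{C}_n$, contradicting optimality of $\lambda^*=0$. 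Hence $A\in \text{bd}(\mathcal{C}_n)$. If $f_1^*<0$, dual attainment gives $X^*\in \mathcal{C}_n^*$ with $\langle A,X^*\rangle = g_1^* = f_1^* < 0$, and since $\mathcal{C}_n = (\mathcal{C}_n^*)^*$ this forces $A\notin \mathcal{C}_n$ by \eqref{notin1}.

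The main technical point is securing attainment on \emph{both} sides rather than just equality of optima with one-sided Slater, because the case $f_1^*<0$ needs an explicit dual witness $X^*$ and the case $f_1^*=0$ needs a primal witness $\lambda^* = 0$ in order to certify $A\in \mathcal{C}_n$; verifying Slater's condition simultaneously for $(P_1)$ and $(D_1)$ neatly handles both requirements.
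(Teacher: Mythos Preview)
Your proposal is correct and follows essentially the same route as the paper: verify Slater's condition on both sides (the paper uses a generic $P\in\text{int}(\mathcal{C}_n^*)$ scaled by $\langle P,C\rangle^{-1}$ where you pick the concrete choice $I_n/\operatorname{tr}(C)$, and the primal Slater argument is identical up to phrasing), then invoke conic strong duality for equality of optima with two-sided attainment, and finally read off the sign trichotomy from Lemma~\ref{iffintDur}. One small remark: your closing paragraph overstates the need for a dual witness when $f_1^*<0$, since $f_1^*<0$ already forces $\lambda=0$ to be primal-infeasible and hence $A\notin\mathcal{C}_n$ directly; but your argument via $X^*$ is of course also valid.
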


\begin{proof}
We first show that Slater's condition holds for ($P_1$).
If $A=0$, then all $\lambda<0$ are strictly feasible points of ($P_1$).
If  $A\not=0$, due to  $C \in \text{int}(\mathcal{C}_n)$,
there exists a $\delta>0$ such that $D\in \text{int}(\mathcal{C}_n)$
for all $D\in \mathcal{S}_n$ with $\|C-D\|\leq \delta$.
Let $\lambda\leq -\frac{\|A\|}{\delta}$.
As $\|C-(C-\frac{1}{\lambda}\cdot A)\|\leq \delta$,
we have  $C-\frac{1}{\lambda} A\in \text{int}(\mathcal{C}_n)$.
So,  $A-\lambda C\in \text{int}(\mathcal{C}_n)$.
That is,  $\lambda$ is a strictly feasible point of ($P_1$).

Choose an arbitrary $P \in \text{int}(\mathcal{C}_n^*)$.
Since $C \in \text{int}(\mathcal{C}_n)$,
we have $\langle P, C\rangle>0$.
Thus, $\langle P, C\rangle^{-1} P \in \text{Feas}(D_1) \cap \text{int}(\mathcal{C}_n^*)$.
So, Slater's condition holds for ($D_1$).

It is obvious that the optimum of ($P_1$) is finite.
Therefore, the optimums of ($P_1$) and ($D_1$) are finite and equal, and the
optimal solution sets of ($P_1$) and ($D_1$) are both nonempty
by the duality theory given in \cite[Theorems 1.25 and 1.26]{DickinsonPhd}.

By Lemma \ref{iffintDur}, we obtain the rest part of the theorem.
\end{proof}

Therefore, checking  interiors of $\mathcal{C}_n$ is equivalent to solving ($P_1$) or ($D_1$). For all  $A\in \mathcal{S}_n$ and $C \in \text{int}(\mathcal{C}_n)$, a maximizer $\lambda^*$ of ($P_1$)
always exists. This leads to an interesting result for $A-\lambda^* C$.

\begin{proposition}\label{bdminimizer}
Suppose $A\in \mathcal{S}_n$, $C \in \text{int}(\mathcal{C}_n)$, and $\lambda^*$ is a maximizer of ($P_1$).
Then $A-\lambda^* C \in \text{bd}(\mathcal{C}_n)$.
\end{proposition}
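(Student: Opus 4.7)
The plan is to argue by contradiction, exploiting the openness of $\text{int}(\mathcal{C}_n)$ against the maximality of $\lambda^*$. Since $\lambda^*$ is feasible for $(P_1)$, we already have $A - \lambda^* C \in \mathcal{C}_n$, so it suffices to rule out $A - \lambda^* C \in \text{int}(\mathcal{C}_n)$; then, because $\mathcal{C}_n$ is closed, $A - \lambda^* C$ must lie on the boundary.

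Suppose for contradiction that $A - \lambda^* C \in \text{int}(\mathcal{C}_n)$. Then there exists $\delta > 0$ such that every $D \in \mathcal{S}_n$ with $\|D - (A - \lambda^* C)\| \leq \delta$ belongs to $\mathcal{C}_n$. Choose any $\mu > 0$ small enough that $\mu \|C\| \leq \delta$ (for instance $\mu = \delta / \|C\|$; note $C \neq 0$ since $C \in \text{int}(\mathcal{C}_n)$). Then
\[
\bigl\|(A - (\lambda^* + \mu) C) - (A - \lambda^* C)\bigr\| = \mu \|C\| \leq \delta,
\]
so $A - (\lambda^* + \mu) C \in \mathcal{C}_n$. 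Hence $\lambda^* + \mu$ is feasible for $(P_1)$ with strictly larger objective value than $\lambda^*$, contradicting the assumption that $\lambda^*$ is a maximizer.

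There is no real obstacle here; the only thing to be careful about is invoking the right characterization of the boundary, namely $\text{bd}(\mathcal{C}_n) = \mathcal{C}_n \setminus \text{int}(\mathcal{C}_n)$, which is valid because $\mathcal{C}_n$ is closed. Alternatively, one could invoke Lemma \ref{iffintDur} directly: if $A - \lambda^* C \in \text{int}(\mathcal{C}_n)$, that lemma (applied with $A - \lambda^* C$ in the role of $A$, and the same $C$) yields some $\mu > 0$ with $(A - \lambda^* C) - \mu C \in \mathcal{C}_n$, again contradicting the optimality of $\lambda^*$.
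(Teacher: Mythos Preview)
Your proof is correct and follows essentially the same approach as the paper's own proof: both argue by contradiction, use openness of $\text{int}(\mathcal{C}_n)$ to find a $\delta$-ball around $A-\lambda^* C$, and then pick $\varepsilon$ (your $\mu$) with $\varepsilon\|C\|\le\delta$ to exhibit a strictly larger feasible $\lambda^*+\varepsilon$. Your additional remarks about why $\text{bd}(\mathcal{C}_n)=\mathcal{C}_n\setminus\text{int}(\mathcal{C}_n)$ and the alternative via Lemma~\ref{iffintDur} are sound but not needed beyond what the paper does.
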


\begin{proof}
We prove by contradiction. Obviously, $A-\lambda^* C \in  \mathcal{C}_n$.
 Suppose $A-\lambda^* C \in \text{int}(\mathcal{C}_n)$.
 Then, there exists a  $\delta>0$ such that $D\in \mathcal{C}_n$ for all $D\in \mathcal{S}_n$ with $\|A-\lambda^* C-D\|\leq \delta$.
 Hence, $A-(\lambda^*+\varepsilon) C \in \mathcal{C}_n$ for all $0<\varepsilon\leq \delta/\|C\|$.
 Thus $\lambda^*+\varepsilon$ is a feasible point of ($P_1$), which contradicts that
$\lambda^*$ is the maximizer of ($P_1$). The proof is completed.
\end{proof}

 \section{A linear moment optimization approach}

As shown above, checking interiors of  $\mathcal{C}_n$ is equivalent to a linear optimization problem with $\mathcal{C}_n$. Generally, it is difficult to solve it directly.
In this section, we formulate ($P_1$) as a linear optimization problem with the cone of moments. We begin with some basics about moments.

\subsection{Formulation as a moment problem}\label{famp}
A symmetric matrix can be identified by a vector consisting of its upper triangular entries.
Let $\mathbb{N}$ be the set of nonnegative integers.
For $\alpha = (\alpha_1,\cdots, \alpha_n) \in \mathbb{N}^n$,
denote $|\alpha| := \alpha_1+\cdots+\alpha_n$.
Let
\begin{equation}\label{AE} \mathcal {A} := \{\alpha
\in \mathbb{N}^n:\, \alpha=e_i+e_j, j\geq i, i,j=1,\cdots,n \},
\end{equation}
where $e_i$ is the $i$-th unit  vector.
So, a matrix $A\in \mathcal{S}_n$ can be identified as a vector $a$ as:
$$
a =(a_{\alpha})_{\alpha\in \mathcal{A}} \in \mathbb{R}^\mathcal{A},\quad
a_{\alpha}=A_{ij}\ \mbox{if}\ \alpha=e_i+e_j,
$$
where $\mathbb{R}^{\mathcal {A}}$ denotes the
space of real vectors indexed by  $\alpha \in \mathcal {A}$.
We call $a$ an $\mathcal {A}$-truncated moment sequence ($\mathcal {A} $-tms).
Let
\begin{equation} \label{KE}
K=\{x\in \mathbb{R}^n :\,
x_1^2+\cdots +x_n^2 -1= 0, x_1 \geq 0, \cdots, x_n \geq 0\}
\end{equation}
be the nonnegative part of the unit sphere.
 Every nonnegative vector is a multiple
of a vector in  $K$.
So, by (\ref{CPd}),
$A\in \mathcal{C}_n$ if and only if
there exist vectors
$b_1,\cdots,b_m \in K$ and $\rho_1,\cdots,\rho_m >0$ such that
 \begin{equation}\label{ECPe}
A=\rho_1 b_1 b_1^T +\cdots +\rho_m b_m b_m^T.
 \end{equation}

 The $\mathcal {A}$-truncated $K$-moment problem
($\mathcal {A}$-T$K$MP) studies whether or not a given {$\mathcal {A}$-tms $a$ admits a
$K$-measure} $\mu$, i.e., a nonnegative Borel measure $\mu$ supported
in $K$ such that
\[
a_{\alpha}= \int_K x^{\alpha} d \mu, \quad
\forall\, \alpha \in  \mathcal {A},
\]
where $x^{\alpha} := x^{\alpha_1}_1 \cdots x^{\alpha_n}_n$.
A measure $\mu$ satisfying the above is called a
{$K$-representing measure} for $a$. A measure is called {finitely
atomic} if its support is a finite set, and is called $m$-atomic if its
support consists of at most $m$ distinct points.
We refer to \cite{Nie} for representing measures of truncated moments sequences.

Hence, by \eqref{ECPe}, a symmetric matrix $A$, with the identifying vector
$a \in \mathbb{R}^{\mathcal {A}}$,  is completely positive if and only if $a$ admits
an $m$-atomic $K$-measure,  i.e.,
\begin{equation}
\label{ECPee} a=\rho_1 [b_1]_{\mathcal {A}} +\cdots +\rho_m [b_m]_{\mathcal
{A}},
\end{equation}
where each $b_i \in K$, $\rho_i>0$, and
\[
[b]_{\mathcal {A}} :=(b^{\alpha})_{\alpha \in
{\mathcal {A}}}.
\]
In other words,
checking CP is equivalent to an $\mathcal {A}$-T$K$MP with
$\mathcal {A}$ and $K$ given in (\ref{AE}) and (\ref{KE}) respectively.

Denote
\[
\mathbb{R}[x]_{\mathcal {A}}:= \mbox{span}\{x^{\alpha}: \alpha\in \mathcal {A}\}.
\]
We say $\mathbb{R}[x]_{\mathcal {A}}$ is {$K$-full} if there exists a polynomial $p \in \mathbb{R}[x]_{\mathcal {A}}$ such
that $p|_{K} >0$ (cf. \cite{Nie4}).
An $\mathcal {A}$-tms $a \in \mathbb{R}^{\mathcal {A}}$
defines an $\mathcal {A}$-Riesz functional $\mathscr{L}_{a}$ acting on
$\mathbb{R}[x]_{\mathcal {A}}$ as
 \begin{equation}\label{La} \mathscr{L}_{a}
(\sum_{\alpha\in \mathcal {A}}p_{\alpha} x^{\alpha}):=
\sum_{\alpha\in \mathcal {A}}p_{\alpha} a_{\alpha}.
 \end{equation}
 For convenience, we also denote $\langle p,a \rangle:= \mathscr{L}_{a}(p)$.
Let
$$
\mathbb{N}_d^n := \{ \alpha \in \mathbb{N}^n: \, |\alpha| \leq d\}$$
 and
$$
\mathbb{R}[x]_{d}:=\mbox{span}\{x^{\alpha}: \alpha\in \mathbb{N}^n_{d}\}.
$$
For $s \in \mathbb{R}^{\mathbb{N}^n_{2k}}$ and $q
\in \mathbb{R}[x]_{2k}$, the $k$-th localizing matrix  of
$q$ generated by $s$ is the symmetric
matrix $L^{(k)}_q (s)$ satisfying
 \begin{equation}\label{Lzqp2}
\mathscr{L}_s (q p^2)= vec(p)^T (L^{(k)}_q (s))vec(p), \quad \forall p\in
\mathbb{R}[x]_{k-\lceil deg(q)/2 \rceil}.
 \end{equation}
In the above, $vec(p)$ denotes the coefficient vector of $p$ in the
graded lexicographical ordering, and $\lceil t\rceil$
denotes the smallest integer that is not smaller than $t$.
In particular, when $q=1$, $L^{(k)}_1 (s)$ is called a  $k$-th
order moment matrix  and denoted as $M_k(s)$. We refer to \cite{Nie,Helton,Nie4} for
more details about localizing and moment matrices.

Denote the polynomials:
\begin{align*}
h(x):=  x_1^2+\cdots +x_n^2-1,g_0(x):=1,
 g_1(x): =  x_1, \cdots,  g_n(x): = x_n.
\end{align*}
 Note that $K$ given in (\ref{KE}) is nonempty compact.
  We can also describe $K$  equivalently as
\begin{equation}\label{K2}
K=\{x\in \mathbb{R}^n: \ h(x)= 0, g(x) \geq 0\},
\end{equation}
where $g(x)=(g_0(x), g_1(x),\cdots,g_{n}(x))$.
As shown in \cite{Nie}, a necessary condition for $s \in \mathbb{R}^{\mathbb{N}^n_{2k}}$
to admit a $K$-measure is
\begin{equation}
\label{SDPC}
  L^{(k)}_{h} (s) = 0, \quad \mbox{and}\quad L^{(k)}_{g_j} (s) \succeq
0, \quad j=0,1,\cdots,n.
\end{equation}
If, in addition to (\ref{SDPC}), $s$ satisfies the {rank condition}
\begin{equation}
\label{RC}
\text{rank} M_{k-1}(s) =\text{rank} M_{k} (s),
\end{equation}
then $s$ admits a unique
$K$-measure, which is $\text{rank} M_k(s)$-atomic
(cf. Curto and Fialkow \cite{CurtoF}).
We say that $s$ is {flat} if both (\ref{SDPC}) and (\ref{RC}) are satisfied.

Given two tms' $y \in \mathbb{R}^{\mathbb{N}^n_{d}}$ and $z \in
\mathbb{R}^{\mathbb{N}^n_{e}}$, we say $z$ is an  extension  of $y$, if $d\leq e$ and
$y_{\alpha} = z_{\alpha}$ for all $\alpha \in \mathbb{N}^n_{d}$. We denote
by $z|_{\mathcal {A}}$ the subvector of $z$, whose entries are indexed by
$\alpha \in \mathcal {A}$. For convenience, we denote by $z|_{d}$
the subvector $z |_{\mathbb{N}^n_{d}}$.
If $z$ is flat and extends $y$, we say $z$ is a {flat
extension} of $y$. Note that an $\mathcal {A}$-tms
$a \in \mathbb{R}^{\mathcal{A}}$ admits a $K$-measure
if and only if it is extendable to a flat tms $z \in \mathbb{R}^{\mathbb{N}^n_{2k}}$
for some $k$ (cf. \cite{Nie}).
By (\ref{ECPee}),   a   matrix $A$ is CP  if and only if
 its identifying vector $a$ has a flat extension.

\subsection{Linear optimization with moments}\label{linCone}
Let $\mathcal {A}$ and $K$ be given as (\ref{AE}) and (\ref{K2}), respectively.
Denote
 $$
 \mathscr{R}_\mathcal{A}(K)=\{a\in \mathbb{R}^\mathcal{A}: meas (a,K)\neq \emptyset\},
 $$
where $meas(a,K)$ is the set of all $K$-measures admitted by $a$.
By \eqref{ECPee},   $\mathscr{R}_\mathcal{A}(K)$ is the CP cone (cf. \cite{Nie1}).

Suppose $A\in \mathcal{S}_n$  and $C \in \text{int}(\mathcal{C}_n)$.
Let $a, c\in \mathscr{R}_\mathcal{A}(K)$ be the identifying vectors of $A$
and $C$, respectively.
Replacing $\mathcal{C}_n$ by $\mathscr{R}_\mathcal{A}(K)$, we formulate ($P_1$) as the linear optimization problem with the cone of moments:
\begin{equation*}
  (P_2):\qquad \left\{\begin{array}{lll}
  f_2^*=&\max  & \lambda\\
  &\mbox{s.t.} &   a - \lambda c \in \mathscr{R}_\mathcal{A}(K).
 \end{array}\right.\qquad
\end{equation*}
Similar to Theorem \ref{strongdualP}, we have:
\begin{proposition}\label{strongdualP2}
Suppose $A\in \mathcal{S}_n$ and $C \in \text{int}(\mathcal{C}_n)$.
Then, the optimum $f_2^*$ of ($P_2$) is finite.
Furthermore,
if $f_2^*<0$,
then $A \notin \mathcal{C}_n$; if $f_2^*=0$,
then $A \in \text{bd}(\mathcal{C}_n)$;
 if $f_2^*>0$, then $A \in \text{int}(\mathcal{C}_n)$.
\end{proposition}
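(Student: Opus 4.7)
The plan is to reduce Proposition \ref{strongdualP2} to Theorem \ref{strongdualP} by observing that $(P_2)$ is simply $(P_1)$ rewritten through the identification between symmetric matrices and $\mathcal{A}$-truncated moment sequences. Concretely, the construction in Section \ref{famp} shows that under the bijection $A \leftrightarrow a$ with $a_{e_i+e_j}=A_{ij}$, equation \eqref{ECPee} yields $\mathscr{R}_\mathcal{A}(K) = \mathcal{C}_n$ (as subsets after identification). So the constraint $a-\lambda c \in \mathscr{R}_\mathcal{A}(K)$ is \emph{literally} equivalent to $A-\lambda C \in \mathcal{C}_n$, hence $\text{Feas}(P_2) = \text{Feas}(P_1)$ and $f_2^* = f_1^*$.

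First I would spell out this identification carefully: given $A \in \mathcal{S}_n$ and $C \in \text{int}(\mathcal{C}_n)$, identify them with $a,c \in \mathbb{R}^\mathcal{A}$ as in Section \ref{famp}. For any $\lambda \in \mathbb{R}$, the vector $a-\lambda c$ is the $\mathcal{A}$-tms associated with $A-\lambda C$. By \eqref{ECPee}, the $\mathcal{A}$-tms associated with a symmetric matrix $M$ lies in $\mathscr{R}_\mathcal{A}(K)$ if and only if $M$ can be written as a nonnegative combination of rank-one matrices $b b^T$ with $b \in K \subseteq \mathbb{R}^n_+$, which (after absorbing the positive weights into the vectors) is exactly the statement that $M \in \mathcal{C}_n$.

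Next, I would invoke Theorem \ref{strongdualP} for $(P_1)$: its optimum $f_1^*$ is finite. Because the two problems have identical objective and identical feasible set, $f_2^* = f_1^*$ is also finite, and the trichotomy
\[
f_1^* < 0 \Longrightarrow A \notin \mathcal{C}_n, \quad f_1^* = 0 \Longrightarrow A \in \text{bd}(\mathcal{C}_n), \quad f_1^* > 0 \Longrightarrow A \in \text{int}(\mathcal{C}_n)
\]
transfers verbatim to $f_2^*$.

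There is really no hard step here; the only thing to be careful about is making the identification $\mathcal{S}_n \leftrightarrow \mathbb{R}^\mathcal{A}$ explicit enough that equality of the feasible regions (not merely isomorphism) is unambiguous. Once that is in place, the proposition is an immediate corollary of Theorem \ref{strongdualP}, and no new duality or moment-theoretic argument is needed. The more substantive work of relating $(P_2)$ to a tractable semidefinite relaxation is deferred to subsequent sections.
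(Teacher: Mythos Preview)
Your proposal is correct and matches the paper's treatment: the paper gives no separate proof, merely prefacing the proposition with ``Similar to Theorem \ref{strongdualP}, we have,'' and explicitly notes that $\mathscr{R}_\mathcal{A}(K)$ is the CP cone, which is exactly the identification you spell out. Your argument that $(P_2)$ is literally $(P_1)$ under the bijection $\mathcal{S}_n \leftrightarrow \mathbb{R}^\mathcal{A}$, hence $f_2^*=f_1^*$, is precisely what the paper intends.
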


Actually, we can further formulate ($P_2$) in the form with  $\mathscr{R}_\mathcal{A}(K)$
and some linear constraints.
Obviously, $c\not=0$.
Suppose $\{p_1, \cdots, p_{\bar n}\}$ is a basis of the orthogonal complement of span$\{c\}$, where $\bar n=\frac{n(n+1)}{2}-1$. Let
\begin{equation}\label{p2b}
p_0=(c^T c)^{-1} c.
\end{equation}
Then,
\begin{equation}\label{orth}
 \langle p_0, c\rangle=1, \quad \langle p_i, c\rangle=0, i=1,\cdots,\bar n.
\end{equation}
Hence, $z=a-\lambda c$ for some $\lambda$ if and only if
\begin{equation}\label{p1}
p_i^T z=p_i^T a, \quad i=1,\cdots,\bar n.
\end{equation}
Moreover,
\begin{equation}\label{p2}
\lambda= (c^T c)^{-1} c^T (a-z).
\end{equation}
The vectors $p_i$ can also be considered as polynomials in $\mathbb{R}[x]_\mathcal{A}$.
Note that
\begin{equation}\label{p3}
\langle p_0, z\rangle= (c^T c)^{-1} c^T z=-\lambda+(c^T c)^{-1} c^T a.
\end{equation}
 By \eqref{p1}-\eqref{p3}, we know ($P_2$) is equivalent to
\begin{equation*}
  (P_3):\qquad\left\{\begin{array}{lll}
 f_3^*= &\min   & \langle p_0,z \rangle\\
  &\mbox{s.t.} & \langle p_i, z\rangle=p_i^T a, \; i=1,\cdots,\bar n\\
    &          & z \in \mathscr{R}_\mathcal{A}(K).
 \end{array}\right.
\end{equation*}

\begin{proposition}\label{PD3}
Suppose $A\in \mathcal{S}_n$ and $C\in \text{int}(\mathcal{C}_n)$.
If $z^*$ is a minimizer of ($P_3$), then
\begin{equation}
\lambda^*= (c^T c)^{-1} c^T (a-z^*)
\end{equation}
is a maximizer of ($P_2$), and vice versa.

\end{proposition}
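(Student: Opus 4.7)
The plan is to show that the substitution $z = a - \lambda c$ is a bijection between the feasible sets of $(P_2)$ and $(P_3)$ under which the two objective values differ by an additive constant that depends only on $a$ and $c$. Since the preceding discussion (equations \eqref{p2b}--\eqref{p3}) already carries out the algebra, the task is essentially organizational: package the forward and backward directions cleanly and then read off the correspondence of optimizers.

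For the forward direction, given any $\lambda \in \text{Feas}(P_2)$, set $z = a - \lambda c$. Then $z \in \mathscr{R}_\mathcal{A}(K)$ by feasibility, and using \eqref{orth}, for each $i = 1,\dots,\bar n$,
\[
\langle p_i, z\rangle = p_i^T a - \lambda\, p_i^T c = p_i^T a,
\]
so $z$ is feasible for $(P_3)$. For the backward direction, suppose $z \in \text{Feas}(P_3)$. The equations $\langle p_i, z - a\rangle = 0$ for $i=1,\dots,\bar n$ say that $z - a$ lies in the orthogonal complement of $\mathrm{span}\{p_1,\dots,p_{\bar n}\}$. Since $\{p_1,\dots,p_{\bar n}\}$ is by construction a basis of the orthogonal complement of $\mathrm{span}\{c\}$ in $\mathbb{R}^\mathcal{A}$ and $\dim \mathbb{R}^\mathcal{A} = \bar n + 1$, this orthogonal complement is precisely $\mathrm{span}\{c\}$. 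Hence $z - a = -\lambda c$ for a unique scalar $\lambda$, and taking the inner product with $c$ yields $\lambda = (c^T c)^{-1} c^T (a - z)$, which is exactly the formula in the statement. Moreover $a - \lambda c = z \in \mathscr{R}_\mathcal{A}(K)$, so $\lambda \in \text{Feas}(P_2)$.

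Finally, to relate the objective values, note that under this correspondence equation \eqref{p3} gives
\[
\langle p_0, z\rangle = -\lambda + (c^T c)^{-1} c^T a.
\]
The right-hand side is an affine function of $\lambda$ with slope $-1$ and a constant term depending only on $a$ and $c$. Therefore minimizing $\langle p_0, z\rangle$ over $\text{Feas}(P_3)$ is equivalent to maximizing $\lambda$ over $\text{Feas}(P_2)$, and the bijection sends minimizers to maximizers. In particular $z^*$ is a minimizer of $(P_3)$ if and only if $\lambda^* = (c^T c)^{-1} c^T(a - z^*)$ is a maximizer of $(P_2)$.

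There is no real obstacle in this argument; the only point that warrants an explicit remark is the dimension count $\bar n + 1 = \tfrac{n(n+1)}{2} = \dim \mathbb{R}^\mathcal{A}$, which is what ensures that $\{c\} \cup \{p_1,\dots,p_{\bar n}\}$ spans the whole ambient space and hence that the orthogonality relations in the definition of $(P_3)$ force $z - a$ to be a scalar multiple of $c$. Everything else is a direct consequence of \eqref{orth} and \eqref{p3}, which were derived earlier.
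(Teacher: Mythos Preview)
Your proof is correct and follows exactly the route the paper intends: the paper does not give a separate proof of this proposition but relies on the preceding equations \eqref{p2b}--\eqref{p3} to establish the equivalence, and you have simply written out that argument carefully. The dimension remark you add is a helpful clarification of why \eqref{p1} forces $z-a\in\mathrm{span}\{c\}$.
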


\section{A semidefinite algorithm for checking interiors}
In this section, we present
a semidefinite algorithm for checking the membership and interiors of $\mathcal{C}_n$.
The cone $\mathscr{R}_\mathcal{A}(K)$ is typically difficult to describe.
However, it has nice semidefinite relaxations.

Let $h$ and $g$ be as in \eqref{K2}.
For each $k \in \mathbb{N}$, denote
\begin{equation}
\label{Ekh}
  \Gamma_{k} (h,g) = \left\{ y \in \mathbb{R}^{\mathbb{N}^n_{2k}}
  :  L^{(k)}_{h} (y) = 0, L^{(k)}_{g_j} (y) \succeq 0, j=0,1,\cdots,n
  \right\}.
\end{equation}
By \eqref{SDPC} and \eqref{RC}, we relax $\mathscr{R}_\mathcal{A}(K)$ by $\Gamma_{k} (h,g)$.
Then the $k$-th order relaxation of ($P_2$) is
\begin{equation*}
 (P_2^k):\qquad \left\{ \begin{array}{lll}
  f_2^k=&\max\limits_{\lambda,y}  & \lambda\\
  &\mbox{s.t.} &   a - \lambda c =y|_{\mathcal{A}},\; y \in  \Gamma_{k} (h,g).
 \end{array}\right.
\end{equation*}
Since $ \text{Feas}(P_2)\subseteq\text{Feas}(P_2^k)$,
we have $f_2^k\geq f_2^*$ for all $k$.
If $f_2^k<0$, then, by Theorem \ref{strongdualP}, $A\notin \mathcal{C}_n$.
Let $\lambda^{*,k}$ be the maximizer of ($P_2^k$).
If $a(\lambda^{*,k}):=a-\lambda^{*,k} c \in \mathscr{R}_\mathcal{A}(K)$, then $f_2^*=f_2^k$ and $\lambda^{*,k}$ is the
maximizer of ($P_2$), i.e., the relaxation ($P_2^k$) is tight for ($P_2$).
If $f_2^k=0$, then $A\in \text{bd}(\mathcal{C}_n)$; otherwise $A\in \text{int}(\mathcal{C}_n)$.

Based on the above, we propose a semidefinite algorithm
for checking interiors of $\mathcal{C}_n$.

\balg\label{Algorithm}  An algorithm for checking interiors of $\mathcal{C}_n$.

\noindent\textbf{Input:} $A\in \mathcal{S}_n$ and $K$ as (\ref{KE}).\\
\textbf{Output:}  An answer $A\notin \mathcal{C}_n$, or $A\in \text{bd}(\mathcal{C}_n)$  or $A\in \text{int}(\mathcal{C}_n)$, with a CP-decomposition.\\
\textbf{Procedure:}

\textbf{Step 0:} Let $k := 1$.

\textbf{Step 1:}
Compute an optimal pair $(\lambda^{*,k},y^{*,k})$ of ($P_2^k$).

\textbf{Step 2:} If $f_2^k<0$, output $A\notin \mathcal{C}_n$ and stop. Otherwise, let $t :=1$.

\textbf{Step 3:} Let $v := y^{*,k}|_{2t}$. If the rank condition (\ref{RC})
is not satisfied, go to Step 6.

\textbf{Step 4:} If $f_2^k=0$, output $A\in \text{bd}(\mathcal{C}_n)$ and stop. Otherwise, go to Step 5.

\textbf{Step 5:} Compute the finitely atomic measure $\mu$ admitted by $v$:
$$\mu = \rho_1 \delta(b_1) + \cdots + \rho_m \delta(b_m),$$
where $m = \text{rank} (M_t (v))$, $b_i \in K$, $\rho_i >
0$, and $\delta(b_i)$ is the Dirac
measure supported on the points $b_i \in K$. Output $A\in \text{int}(\mathcal{C}_n)$
with a CP-decomposition of $A$ \eqref{ECPe} and stop.

\textbf{Step 6:} If $t < k$, set $t := t+1$ and go to Step 3; otherwise, set
$k := k+1$ and go to Step 1.
 \ealg

 Algorithm \ref{Algorithm} gives a certificate for whether $A\notin \mathcal{C}_n$,
 or  $A\in \text{bd}(\mathcal{C}_n)$, or $A\in \text{int}(\mathcal{C}_n)$.
 A CP-decomposition can also be obtained if $A\in \mathcal{C}_n$.

  \begin{remark} \label{matomic}
 We use Henrion and Lasserre's method \cite{HenrionJ} to get a $m$-atomic $K$-measure for $y^{*,k}$.
 The CP-decomposition of the boundary point $A-\lambda^* C$ (see Proposition \ref{bdminimizer}) is computed,
  with which the CP-decomposition of $A$ can be further obtained
 if $A\in \mathcal{C}_n$ (i.e. $\lambda^*\geq 0$).
\end{remark}

\begin{remark}\label{remarkbothal}
We apply Step 3 - Step 6 to check whether
$a(\lambda^{*,k}):= a-\lambda^{*,k} c \in \mathscr{R}_\mathcal{A}(K)$ or not.
It might be possible that $a(\lambda^{*,k})$
belongs to $\mathscr{R}_\mathcal{A}(K)$ while $y^{*,k}|_{2t}$ is not flat for all $t$. In such cases,
we can apply Algorithms given in \cite{Nie,ZhouF} to check if $a(\lambda^{*,k})\in \mathscr{R}_\mathcal{A}(K)$ or not.
In computational experiments, the finite convergence always occurs.

\end{remark}

\begin{remark} In Step 1, we solve   ($P_2^k$).
By Proposition \ref{PD3},
we can instead  solve the relaxation of ($P_3$):
 \begin{equation*}
 (P_3^k):\qquad \left\{ \begin{array}{lll}
 f_3^k= &\min   & \langle p_0,z \rangle\\
  &\mbox{s.t.} & \langle p_i, z\rangle=p_i^T a, \; i=1,\cdots,\bar n\\
    &          & z =y|_{\mathcal{A}},\; y\in \Gamma_{k} (h,g).
 \end{array}\right.
\end{equation*}
\end{remark}

\begin{proposition}\label{PD3k}
Suppose $A\in \mathcal{S}_n$  and $C\in \text{int}(\mathcal{C}_n)$.
If $z^{*,k}$ is a minimizer of ($P_3^k$), then
\begin{equation}
\lambda^{*,k}= (c^T c)^{-1} c^T (a-z^{*,k})
\end{equation}
is a maximizer of ($P_2^k$), and vice versa.
\end{proposition}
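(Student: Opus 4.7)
The plan is to mimic the reasoning already given for Proposition \ref{PD3}, but with $\mathscr{R}_\mathcal{A}(K)$ replaced by its relaxation $\Gamma_k(h,g)$; since the bijection between the two feasible sets depends only on the orthogonality relations \eqref{orth} and is independent of which cone encodes the moments, the same argument will carry over essentially verbatim.

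First I would set up the correspondence on feasible points. Given $(\lambda,y)$ feasible for $(P_2^k)$, define $z := y|_{\mathcal{A}} = a-\lambda c$. Using \eqref{orth} I can verify $\langle p_i,z\rangle = p_i^T a - \lambda\,p_i^T c = p_i^T a$ for $i=1,\dots,\bar n$, so $(z,y)$ is feasible for $(P_3^k)$. Conversely, if $(z,y)$ is feasible for $(P_3^k)$, then $z-a$ satisfies $\langle p_i,z-a\rangle = 0$ for $i=1,\dots,\bar n$, so $z-a$ is orthogonal to $\mathrm{span}\{p_1,\dots,p_{\bar n}\}$, which by the choice of basis is the orthogonal complement of $\mathrm{span}\{c\}$. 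Hence $z-a = -\lambda c$ for some scalar $\lambda$, and taking the inner product with $c$ gives $\lambda = (c^Tc)^{-1} c^T(a-z)$. This shows $(\lambda,y)$ is feasible for $(P_2^k)$.

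Next I would match the objective values under this correspondence. Using \eqref{p2b} and $z = a-\lambda c$, I compute
\begin{equation*}
\langle p_0, z\rangle = (c^Tc)^{-1} c^T(a-\lambda c) = (c^Tc)^{-1} c^T a - \lambda,
\end{equation*}
so $\lambda = (c^Tc)^{-1}c^T a - \langle p_0, z\rangle$. Since $(c^Tc)^{-1}c^T a$ is a constant independent of the decision variables, maximizing $\lambda$ over $\text{Feas}(P_2^k)$ is equivalent to minimizing $\langle p_0,z\rangle$ over $\text{Feas}(P_3^k)$, and optimizers correspond under the explicit map $\lambda = (c^Tc)^{-1}c^T(a-z)$.

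There is essentially no obstacle here; the whole content is linear-algebraic bookkeeping on the ambient vector space $\mathbb{R}^{\mathcal{A}}$, and the relaxation cone $\Gamma_k(h,g)$ enters only through the common constraint $z = y|_{\mathcal{A}}$, $y \in \Gamma_k(h,g)$, which is preserved by the change of variables. The only point to be a bit careful about is that both problems parameterize the same set $\{y\in\Gamma_k(h,g)\}$ — the $\lambda$ in $(P_2^k)$ and the linear equations in $(P_3^k)$ are two different ways of enforcing that $y|_{\mathcal{A}}$ lies on the affine line $a + \mathbb{R}\cdot c$ — which is exactly what the basis $\{p_0,p_1,\dots,p_{\bar n}\}$ of $\mathbb{R}^{\mathcal{A}}$ from \eqref{orth} makes transparent.
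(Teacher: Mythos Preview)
Your proposal is correct and follows essentially the same approach as the paper: the paper does not give a separate proof of Proposition~\ref{PD3k}, but relies on the linear-algebraic equivalences \eqref{p1}--\eqref{p3} established before Proposition~\ref{PD3}, which you have reproduced in detail and observed carry over verbatim when $\mathscr{R}_\mathcal{A}(K)$ is replaced by $\Gamma_k(h,g)$.
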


Since $K$ as in (\ref{K2}) is nonempty compact and $\mathcal {A}$ as in (\ref{AE}) is finite, $\mathbb{R}[x]_{\mathcal {A}}$ is $K$-full (cf. \cite{ZhouF}).
Note that ($P_2$) always has feasible points.
Combining Nie's result \cite[Theorem 4.3]{Nie1} with Proposition \ref{PD3} and Proposition \ref{PD3k},
we have the following asymptotic convergence of Algorithm \ref{Algorithm}.

\begin{proposition}
\label{Algorithmresults}
Algorithm \ref{Algorithm} has the following properties:
\begin{itemize}

\item [(i)] For all $k$ sufficiently large, ($P_2^k$) has a
maximizing pair $(\lambda^{*,k}, y^{*,k})$.

\item [(ii)] The sequence $\{\lambda^{*,k}\}$ is monotonically decreasing
and converges to the maximizer of ($P_2$). Furthermore, the sequence $\{\lambda^{*,k}\}$ is bounded, and each of its accumulation points is a
maximizer of ($P_2$).
\end{itemize}
\end{proposition}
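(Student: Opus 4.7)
My plan is to reduce the proposition to a direct application of Nie's asymptotic convergence theorem for semidefinite relaxations of linear moment programs, namely \cite[Theorem 4.3]{Nie1}, via the explicit bijection between ($P_2^k$) and ($P_3^k$) furnished by Proposition \ref{PD3k}. Because the map $z^{*,k}\mapsto \lambda^{*,k}=(c^Tc)^{-1}c^T(a-z^{*,k})$ is a continuous affine bijection between minimizers of ($P_3^k$) and maximizers of ($P_2^k$) (and likewise between ($P_3$) and ($P_2$) via Proposition \ref{PD3}), statements about existence, monotonicity, boundedness, and accumulation points will transfer verbatim between the two formulations. Thus it suffices to prove the corresponding facts for ($P_3^k)$ and then translate.

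First I would verify the hypotheses of Nie's theorem for ($P_3$). The set $K$ in \eqref{K2} is nonempty and compact; the index set $\mathcal{A}$ in \eqref{AE} is finite; $\mathbb{R}[x]_{\mathcal{A}}$ is $K$-full, as noted in the paragraph preceding the proposition; and ($P_3$) is feasible because $z=a$ trivially satisfies $\langle p_i,z\rangle=p_i^T a$ and lies in $\mathscr{R}_{\mathcal{A}}(K)$. With these ingredients in hand, \cite[Theorem 4.3]{Nie1} applies to the hierarchy $\{(P_3^k)\}$ and yields: for all sufficiently large $k$, ($P_3^k$) admits a minimizer $z^{*,k}$; the sequence $\{f_3^k\}$ is monotonically nondecreasing and converges to $f_3^*$; the sequence of minimizers $\{z^{*,k}\}$ is bounded; and every accumulation point of $\{z^{*,k}\}$ is a minimizer of ($P_3$).

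It remains to translate back. Existence of a minimizer $z^{*,k}$ of ($P_3^k$), together with the constraint $z=y|_{\mathcal{A}}$, $y\in\Gamma_k(h,g)$, produces a maximizing pair $(\lambda^{*,k},y^{*,k})$ of ($P_2^k$) via Proposition \ref{PD3k}, giving (i). From \eqref{p3} one has $\langle p_0,z\rangle=-\lambda+(c^Tc)^{-1}c^T a$, so monotone increase of $f_3^k$ to $f_3^*$ is exactly monotone decrease of $\lambda^{*,k}$ to $f_2^*$; boundedness of $\{z^{*,k}\}$ yields boundedness of $\{\lambda^{*,k}\}$, and the accumulation-point statement transfers by continuity of the affine map $z\mapsto(c^Tc)^{-1}c^T(a-z)$, giving (ii).

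The main obstacle I anticipate is purely bookkeeping: checking that Nie's theorem, as stated in \cite{Nie1}, accommodates the specific form of ($P_3$)---in particular the linear equality constraints $\langle p_i,z\rangle=p_i^T a$ and the fact that the objective $\langle p_0,z\rangle$ sits inside $\mathbb{R}[x]_{\mathcal{A}}$. Since ($P_3$) is already a generic linear optimization problem over the truncated moment cone $\mathscr{R}_{\mathcal{A}}(K)$ with linear equality constraints on the moments, no reformulation should be required, and the argument reduces to a routine citation followed by the translation described above.
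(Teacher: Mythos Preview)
Your approach is exactly the paper's: verify the standing hypotheses ($K$ nonempty compact, $\mathcal{A}$ finite, $\mathbb{R}[x]_{\mathcal{A}}$ $K$-full, feasibility), invoke \cite[Theorem~4.3]{Nie1} for the hierarchy $(P_3^k)$, and translate back via Propositions~\ref{PD3} and~\ref{PD3k}. The paper in fact dispatches the whole proposition in one sentence preceding its statement.

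One small slip: your feasibility argument for $(P_3)$ is not valid in general. You claim $z=a$ lies in $\mathscr{R}_{\mathcal{A}}(K)$, but $A$ is an arbitrary element of $\mathcal{S}_n$, not assumed completely positive, so $a$ need not be in $\mathscr{R}_{\mathcal{A}}(K)$. The correct argument is the one the paper uses implicitly (via Theorem~\ref{strongdualP}): since $C\in\text{int}(\mathcal{C}_n)$, for $\lambda$ sufficiently negative one has $A-\lambda C\in\mathcal{C}_n$, so $z=a-\lambda c\in\mathscr{R}_{\mathcal{A}}(K)$ satisfies the affine constraints of $(P_3)$. With this repair your proof goes through and coincides with the paper's.
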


The finite convergence also happens, under some general conditions in optimization \cite{Nie1}.

\section{Numerical experiments}

In this section, we present numerical experiments
for checking the membership and interiors of $\mathcal{C}_n$ by using Algorithm \ref{Algorithm}.
A CP-decomposition of a matrix is also given if it is CP.
We use softwares GloptiPoly 3 \cite{HenrionJJ} and SeDuMi \cite{Sturm}
to solve   ($P_3^k$) in Step 1 of Algorithm \ref{Algorithm}.
If $|\lambda^{*,k}|< 10^{-4}$, we regard that the matrix is on the boundary of $\mathcal{C}_n$.

\bex\label{BermanExample} \upshape
Consider the matrix $A$ given as (cf. \cite[Example 2.9]{BermanN}):
\begin{equation}\label{BermanExa}
A=\left(\begin{array}{ccccc}
  1&1&0&0&1\\
  1&2&1&0&0\\
  0&1&2&1&0\\
  0&0&1&2&1\\
  1&0&0&1&6
\end{array}\right).
\end{equation}
We have $A\notin \mathcal{C}_5$ (cf. \cite{BermanN}).
We apply Algorithm \ref{Algorithm} to verify this fact.
Choose $C=I_5+E_5$. Then the identifying vector of $C$ is
\[c=(2,1,1,1,1,2,1,1,1,2,1,1,2,1,2)^T.\]
We can choose
\[
p_i=-e_1+e_{i+1}, \quad i\in T_1= \left\{5,9,12,14\right\},\]
\[
p_i=-e_1+2e_{i+1},\quad i\in \{1,\ldots,14\} \setminus T_1
\]
to be basis vectors of span$\{c\}^{\perp}$. Let
\[p_0=(c^T c)^{-1} c=\frac{1}{30}\cdot c.\]
Since $\lambda^{*,k}=-0.3982 <0$ at $k=1$,
we know $A\notin \mathcal{C}_5$.
 \eex

\bex\label{Exampleshaked}\upshape
Consider the matrix $A$ given as (similar to \cite[Exercise 2.22]{BermanN}):
\begin{equation}\label{Exashaked}
A=\left(\begin{array}{ccccccc}
  2&1 &0 &0 &0 &0 &1\\
  1&2 &1 &0 &0 &0 &0\\
  0&1 &2 &1 &0 &0 &0\\
  0&0 &1 &2 &1 &0 &0\\
  0&0 &0 &1 &2 &1 &0\\
  0&0 &0 &0 &1 &2 &1\\
  1&0 &0 &0 &0 &1 &2
\end{array}\right).
\end{equation}
It is shown in \cite{BermanN} that nonnegative symmetric diagonally dominant matrices are completely positive. So,  $A\in \mathcal{C}_7$. Since $A\ngtr 0$, we have $A\in \text{bd}(\mathcal{C}_7)$.
We now verify it  by Algorithm \ref{Algorithm}.
Choose $C=I_7+E_7$. Then the identifying vector of $C$ is
\[c=(2,1,1,1,1,1,1,2,1,1,1,1,1,2,1,1,1,1,2,1,1,1,2,1,1,2,1,2)^T.\]
We choose
\[
p_i=-e_1+e_{i+1}, \quad i\in T_2= \left\{7,13,18,22,25,27\right\},\]
\[
p_i=-e_1+2e_{i+1},\quad i\in \{1,\ldots,27\} \setminus T_2
\]
to be basis vectors of span$\{c\}^\perp$. Let
\[p_0=(c^T c)^{-1} c=\frac{1}{49}\cdot c.\]
Algorithm \ref{Algorithm} terminates at $k=4$, with
$|\lambda^{*,k}|= 2.0815e-008<10^{-4}$ and $y(\lambda^{*,k})\in \mathscr{R}_\mathcal{A}(K)$. As $\lambda^{*,k}\approx 0$, we regard $A\in \text{bd}(\mathcal{C}_7)$.
We obtain the  CP-decomposition $A =\sum_{i=1}^{7} \rho_i b_i b_i^T$,
where the points and their weights are:
{\small
\begin{eqnarray}\label{Exadur3}
&\rho_1=2.0000,  & b_1=(0.0000, 0.0000, 0.0000, 0.0000, 0.0000, 0.7071, 0.7071)^T, \nonumber \\
&\rho_2=2.0000,  & b_2=(0.0000, 0.0000, 0.0000, 0.0000, 0.7071, 0.7071, 0.0000)^T, \nonumber \\
&\rho_3=2.0000,  & b_3=(0.7071, 0.0000, 0.0000, 0.0000, 0.0000, 0.0000, 0.7071)^T, \nonumber\\
&\rho_4=2.0000,  & b_4=(0.0000, 0.0000, 0.7071, 0.7071, 0.0000, 0.0000, 0.0000)^T, \nonumber\\
&\rho_5=2.0000,  & b_5=(0.0000, 0.7071, 0.7071, 0.0000, 0.0000, 0.0000, 0.0000)^T,\nonumber\\
&\rho_6=2.0000,  & b_6=(0.0000, 0.0000, 0.0000, 0.7071, 0.7071, 0.0000, 0.0000)^T, \nonumber\\
&\rho_7=2.0000,  & b_7=(0.7071, 0.7071, 0.0000, 0.0000, 0.0000, 0.0000, 0.0000)^T.\nonumber
\end{eqnarray}
}
In fact, we get the minimal CP-decomposition (cf. \cite[Remark 3.1]{Shaked-MondererBomze}).
\eex

\bex\label{Exampleself1}\upshape
Consider the matrix $A$ given as:
\begin{equation}\label{Exaself1}
A=\left[\begin{array}{cccccc}
   2 &1 &1  &1  &1  &2\\
   1 &2 &3  &1  &1  &1\\
   1 &3 &6 &4  &1  &1\\
   1 &1 &4  &11 &3  &1\\
   1 &1 &1  &3  &9 &3\\
   2 &1 &1  &1  &3  &3
\end{array}\right].
\end{equation}
Since $A$ can be written as
\begin{align}\label{Exaself1decom}
A =&  \left(\begin{array}{c}
  1 \\
  1 \\
  1 \\
  1 \\
  1 \\
  1
\end{array}\right)\left(\begin{array}{c}
  1 \\
  1 \\
  1 \\
  1 \\
  1 \\
  1
\end{array}\right)^T  + \left(\begin{array}{c}
  0 \\
  1 \\
  2 \\
  0 \\
  0 \\
  0
\end{array}\right)\left(\begin{array}{c}
  0 \\
  1 \\
  2 \\
  0 \\
  0 \\
  0
\end{array}\right)^T  + \left(\begin{array}{c}
  0 \\
  0 \\
  1 \\
  3 \\
  0 \\
  0
\end{array}\right)\left(\begin{array}{c}
  0 \\
  0 \\
  1 \\
  3 \\
  0 \\
  0
\end{array}\right)^T\\
& + \left(\begin{array}{c}
  0 \\
  0 \\
  0 \\
  1 \\
  2 \\
  0
\end{array}\right)\left(\begin{array}{c}
  0 \\
  0 \\
  0 \\
  1 \\
  2 \\
  0
\end{array}\right)^T+\left(\begin{array}{c}
  0 \\
  0 \\
  0 \\
  0 \\
  2 \\
  1
\end{array}\right)\left(\begin{array}{c}
  0 \\
  0 \\
  0 \\
  0 \\
  2 \\
  1
\end{array}\right)^T+\left(\begin{array}{c}
  1 \\
  0 \\
  0 \\
  0 \\
  0 \\
  1
\end{array}\right)\left(\begin{array}{c}
  1 \\
  0 \\
  0 \\
  0 \\
  0 \\
  1
\end{array}\right)^T,\nonumber
\end{align}
by  Dickinson's result (\ref{dickinsonform}), $A\in \text{int}(\mathcal{C}_6)$.
We now verify it by Algorithm \ref{Algorithm}.
Choose $C=I_6+E_6$. Then the identifying vector of $C$ is
\[c=(2,1,1,1,1,1,2,1,1,1,1,2,1,1,1,2,1,1,2,1,2)^T.\]
Choose
\[
p_i=-e_1+e_{i+1}, \quad i\in T_1= \left\{6,11,15,18,20\right\},\]
\[
p_i=-e_1+2e_{i+1},\quad i\in \{1,\ldots,20\} \setminus T_1
\]
to be basis vectors of span$\{c\}^\perp$. Let
\[p_0=(c^T c)^{-1} c=\frac{1}{39}\cdot c.\]
Algorithm \ref{Algorithm} terminates at $k=3$, with
$\lambda^{*,k}=0.0726>0$. So, $A\in \text{int}(\mathcal{C}_6)$.
We obtain the CP-decomposition $A =0.0726\cdot(I_6 +E_6)+\sum_{i=1}^{7} \rho_i b_i b_i^T$,
where
\begin{eqnarray}
&\rho_1=2.9447,  & b_1=(0.1034, 0.0000, 0.1929, 0.9757, 0.0000, 0.0000)^T, \nonumber \\
&\rho_2=5.5366,  &  b_2=(0.0945, 0.0561, 0.4340, 0.8734, 0.0000, 0.1918)^T, \nonumber \\
&\rho_3=5.8588, &  b_3=(0.0000, 0.0030, 0.0000, 0.6941, 0.7199, 0.0000)^T,  \nonumber\\
&\rho_4=3.0631,  & b_4=(0.0986, 0.3668, 0.7263, 0.5729, 0.0000, 0.0000)^T, \nonumber \\
&\rho_5=4.1047,  & b_5=(0.0790, 0.5271, 0.8462, 0.0000, 0.0000, 0.0000)^T, \nonumber \\
&\rho_6=2.8900,  &  b_6=(0.7372, 0.2209, 0.0000, 0.0000, 0.0000, 0.6386)^T, \nonumber \\
&\rho_7=7.7308, &  b_7=(0.1383, 0.1364, 0.1383, 0.0000, 0.8676, 0.4365)^T.  \nonumber
\end{eqnarray}

\eex

\section{Dickinson's form}
We present Algorithm \ref{Algorithm} for checking the membership and interiors of  $\mathcal{C}_n$.
If $A\in \mathcal{C}_n$, Algorithm \ref{Algorithm} can give a CP-decomposition of $A$.
Actually, we can also design a similar algorithm to give a
CP-decomposition of $A$ in Dickinson's form if $A\in \text{int}(\mathcal{C}_n)$.

\begin{lemma}\label{iffint} Suppose $A\in \mathcal{S}_n$.
Then $A \in \text{int}(\mathcal{C}_n)$ if and only if rank$(A)=n$  and, for some $b_1\in R_{++}^n$,
there exists a $\lambda>0$ such that $A- \lambda b_1 b_1^T \in \mathcal{C}_n$.
\end{lemma}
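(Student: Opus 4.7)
The plan is to reduce both directions to Dickinson's characterization \eqref{dickinsonform}, which gives an internal description of $\text{int}(\mathcal{C}_n)$ singling out one strictly positive column.

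For the forward direction, assume $A\in\text{int}(\mathcal{C}_n)$. By \eqref{dickinsonform} there exist $b_1\in\mathbb{R}^n_{++}$ and a nonnegative matrix $\tilde B \geq 0$ with $A=(b_1,\tilde B)(b_1,\tilde B)^T = b_1 b_1^T + \tilde B \tilde B^T$ and $\text{rank}(b_1,\tilde B)=n$. This already yields $\text{rank}(A)=n$, since $\text{rank}(BB^T)=\text{rank}(B)$ over $\mathbb{R}$. For the $\lambda$, I would simply take any $\lambda\in(0,1]$; then
\[
A-\lambda b_1 b_1^T = (1-\lambda)\, b_1 b_1^T + \tilde B \tilde B^T \in \mathcal{C}_n,
\]
because both terms are sums of rank-one outer products of nonnegative vectors. (Alternatively, one could invoke openness of $\mathcal{C}_n$ at $A$ and pick $\lambda$ small with $\|\lambda b_1 b_1^T\|$ below the radius of a CP ball around $A$, but the explicit choice is cleaner.)

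For the converse, suppose $\text{rank}(A)=n$ and that $b_1\in\mathbb{R}^n_{++}$, $\lambda>0$ satisfy $A-\lambda b_1 b_1^T\in\mathcal{C}_n$. Write $A-\lambda b_1 b_1^T = \tilde B \tilde B^T$ with $\tilde B\geq 0$, and set
\[
B := (\sqrt{\lambda}\, b_1,\, \tilde B).
\]
Then $B\geq 0$, the first column $\sqrt{\lambda}\,b_1$ lies in $\mathbb{R}^n_{++}$, and $A = BB^T$. Since $\text{rank}(B)=\text{rank}(BB^T)=\text{rank}(A)=n$, Dickinson's characterization \eqref{dickinsonform} places $A$ in $\text{int}(\mathcal{C}_n)$.

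No step here is really an obstacle; the only subtlety is remembering that the rank hypothesis on $A$ is genuinely needed because Dickinson's form demands $\text{rank}(B)=n$, which is not automatic merely from writing $A$ as a sum of nonnegative rank-one matrices with one of them strictly positive. The forward direction supplies the rank through the given Dickinson factorization; the reverse direction uses it crucially to produce a Dickinson factor $B$ of full rank from the CP-decomposition of $A-\lambda b_1 b_1^T$.
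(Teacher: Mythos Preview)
Your proof is correct and is essentially the argument the paper has in mind: the paper omits the proof of Lemma~\ref{iffint}, pointing to the proof of Lemma~\ref{iffintDur}, whose converse builds a full-rank factor by concatenating $\sqrt{\lambda}$ times a known interior factor with a CP factor of the residual---exactly your $B=(\sqrt{\lambda}\,b_1,\tilde B)$ together with Dickinson's characterization \eqref{dickinsonform}. The only cosmetic difference is in the forward direction: the paper's template (Lemma~\ref{iffintDur}) uses openness to pick $\lambda=\delta/\|b_1 b_1^T\|$ for an arbitrary $b_1\in\mathbb{R}^n_{++}$, whereas you extract $b_1$ from an existing Dickinson factorization and take $\lambda\in(0,1]$; both are valid and you already note the openness alternative.
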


The proof of Lemma \ref{iffint} is similar to that of Lemma \ref{iffintDur}, so we omit here.
Lemma \ref{iffint} gives an equivalent characterization of the interior of $\mathcal{C}_n$.
Therefore, we can also transform the problem of checking interiors of $\mathcal{C}_n$ to the following linear optimization problem:
\begin{equation*}
 (\bar P_1):\qquad  \left\{ \begin{array}{lll}
  \bar f_1^*=&\max  & \lambda\\
  &\mbox{s.t.} & A-\lambda b_1 b_1^T \in \mathcal{C}_n,
 \end{array} \right.
\end{equation*}
where $b_1\in R^n_{++}$.
A simple choice of $b_1$ is $\mathbf{1}_{n}$, the $n$ dimensional vector of all ones.
The difference between $(\bar P_1)$ and $(P_1)$ is that we use $b_1b_1^T\in \text{bd}(\mathcal{C}_n)$ instead of $C\in \text{int}(\mathcal{C}_n)$.

By repeating similar arguments as in Sections 2 and 3, we can get

\begin{itemize}
\item[(1)] If $(\bar P_1)$ is infeasible, then $A\notin \mathcal{C}_n$.

\item[(2)] If $(\bar P_1)$ is feasible, we have:

\begin{itemize}

\item[(i)]  If $\bar f_1^*<0$, then $A\notin \mathcal{C}_n$.

\item[(ii)]  If $\bar f_1^*=0$, then $A\in \text{bd}(\mathcal{C}_n)$.

\item[(iii)]  If $\bar f_1^*>0$ and rank$(A)< n$, then $A\in \text{bd}(\mathcal{C}_n)$.

\item[(iv)]  If $\bar f_1^*>0$ and rank$(A)=n$, then $A\in \text{int}(\mathcal{C}_n)$.

\end{itemize}
\end{itemize}

We formulate $(\bar P_1)$ as the linear optimization problem:
\begin{equation*}
  (\bar P_2):\qquad \left\{\begin{array}{lll}
  \bar f_2^*=&\max  & \lambda\\
  &\mbox{s.t.} &   a - \lambda \bar b \in \mathscr{R}_\mathcal{A}(K),
 \end{array}\right.\qquad
\end{equation*}
where $a$ and $\bar b$ are the identifying vectors of $A$ and $b_1b_1^T$, respectively.
The $k$-th order semidefinite relaxation of $(\bar P_2)$ is
\begin{equation*}
 (\bar P_2^k):\qquad \left\{ \begin{array}{lll}
  \bar f_2^k=&\max\limits_{\lambda,y}  & \lambda\\
  &\mbox{s.t.} &   a - \lambda \bar b =y|_{\mathcal{A}},\; y \in  \Gamma_{k} (h,g).
 \end{array}\right.
\end{equation*}

We present another  algorithm for checking the membership and interiors of  $\mathcal{C}_n$ as follows.

\balg\label{Algorithmext} 
\qquad

\noindent\textbf{Input:} $A\in \mathcal{S}_n$ and $K$ as (\ref{KE}).\\
\textbf{Output:}  $A\notin \mathcal{C}_n$, or $A\in \text{bd}(\mathcal{C}_n)$, or $A\in \text{int}(\mathcal{C}_n)$
with a CP-decomposition in Dickinson's form (\ref{dickinsonform}).\\
\textbf{Procedure:}

\textbf{Step 0:} Let $k := 1$.

\textbf{Step 1:} Solve the relaxation ($\bar P_2^k$). If ($\bar P_2^k$) is infeasible, stop and output that $A\notin \mathcal{C}_n$;
otherwise, compute an optimal pair $(\lambda^{*,k},y^{*,k})$ of ($P_2^k$).

\textbf{Step 2:} If $\bar f_2^k<0$, stop and output that $A\notin \mathcal{C}_n$;
else let $t :=1$.

\textbf{Step 3:} Let $v := y^{*,k}|_{2t}$. If the rank condition (\ref{RC})
is not satisfied, go to Step 6.

 \textbf{Step 4:} Compute the finitely atomic measure $\mu$ admitted by $v$:
$$\mu = \rho_2 \delta(b_2) + \cdots + \rho_m \delta(b_m),$$
where $m = \text{rank} (M_t (v))$, $b_i \in K$, $\rho_i >
0$, and $\delta(b_i)$ is the Dirac
measure supported on the point $b_i\in K$.

 \textbf{Step 5:} If rank$(A)< n$ or $f_2^k=0$, output $A\in \text{bd}(\mathcal{C}_n)$ with a CP-decomposition and stop.
Otherwise, output $A\in \text{int}(\mathcal{C}_n)$
with a CP-decomposition of $A$ in Dickinson's form \eqref{dickinsonform} and stop.

\textbf{Step 6:} If $t < k$, set $t := t+1$ and go to Step 3; otherwise, set
$k := k+1$ and go to Step 1.
 \ealg

 Algorithm \ref{Algorithmext} can check whether a matrix $A\in \mathcal{S}_n$ is CP or not.
If it is CP, Algorithm \ref{Algorithmext}  can further  check whether $A\in \text{bd}(\mathcal{C}_n)$ or $A\in \text{int}(\mathcal{C}_n)$.
If  $A\in \text{int}(\mathcal{C}_n)$, a CP-decomposition of $A$ in Dickinson's form (\ref{dickinsonform}) can be given.
The convergence results of Algorithm \ref{Algorithmext} are similar to those of Algorithm \ref{Algorithm}, so we omit here.

We test Algorithm \ref{Algorithmext} on  some examples.

\bex\label{Exampleself1dickson}\upshape
 Consider the   matrix $A$ given as \eqref{Exaself1}.
We now use Algorithm \ref{Algorithmext} to verify $A\in \text{int}(\mathcal{C}_6)$.
Let $b_1=\mathbf{1}_{6}$. Then the identifying vector of $b_1 b_1^T$ is $\bar b=\mathbf{1}_{21}$.
\[\bar b=(1,1,1,1,1,1,1,1,1,1,1,1,1,1,1,1,1,1,1,1,1)^T.\]
Choose
\[
p_i=-e_1+e_{i+1}, \quad i\in \{1,\ldots,20\}
\]
to be basis vectors of span$\{\bar b\}^{\perp}$, and let
\[p_0=(\bar b^T \bar b)^{-1} \bar b=\frac{1}{21}\cdot \bar b.\]
Algorithm \ref{Algorithmext} terminates at $k=3$, with
$\lambda^{*,k}=1.0000>0$ and $y(\lambda^{*,k})\in \mathscr{R}_\mathcal{A}(K)$. So, $A\in \text{int}(\mathcal{C}_6)$.
We  obtain the  CP-decomposition $A =\sum_{i=1}^{6} \rho_i b_i b_i^T$ in Dickinson's form,
where
\begin{eqnarray}
&\rho_1=1.0000,  & b_1=(1.0000, 1.0000, 1.0000, 1.0000, 1.0000, 1.0000)^T, \nonumber \\
&\rho_2=5.0000,  &  b_2=(0.0000, 0.4472, 0.8944, 0.0000, 0.0000, 0.0000)^T, \nonumber \\
&\rho_3=10.0000, &  b_3=(0.0000, 0.0000, 0.3162, 0.9487, 0.0000, 0.0000)^T,  \nonumber\\
&\rho_4=5.0000,  & b_4=(0.0000, 0.0000, 0.0000, 0.0000, 0.8944, 0.4472)^T, \nonumber \\
&\rho_5=2.0000,  & b_5=(0.7071, 0.0000, 0.0000, 0.0000, 0.0000, 0.7071)^T, \nonumber \\
&\rho_6=5.0000,  &  b_6=(0.0000, 0.0000, 0.0000, 0.4472, 0.8944, 0.0000)^T. \nonumber
\end{eqnarray}
The computed decomposition above is the same as \eqref{Exaself1decom}.
\eex

\bex\label{Example6}\upshape
Consider the matrix $A$ given as (cf. \cite{Sponseldur}):
\begin{equation}\label{Exadur}
A=\left(\begin{array}{ccccc}
  2&1 &1 &1 &2\\
  2&2 &2 &1 &1\\
  1&2 &6 &5 &1\\
  1&1 &5 &6 &2\\
  2&1 &1 &2 &3
\end{array}\right).
\end{equation}
Since $A$ can be written as
\begin{equation}\label{Exadur2}
\begin{aligned}
A =&  \left(\begin{array}{c}
  1 \\
  1 \\
  1 \\
  1 \\
  1
\end{array}\right)\left(\begin{array}{c}
  1 \\
  1 \\
  1 \\
  1 \\
  1
\end{array}\right)^T  + \left(\begin{array}{c}
  0 \\
  1 \\
  1 \\
  0 \\
  0
\end{array}\right)\left(\begin{array}{c}
  0 \\
  1 \\
  1 \\
  0 \\
  0
\end{array}\right)^T  + \left(\begin{array}{c}
  0 \\
  0 \\
  2 \\
  2 \\
  0
\end{array}\right)\left(\begin{array}{c}
  0 \\
  0 \\
  2 \\
  2 \\
  0
\end{array}\right)^T\\
& + \left(\begin{array}{c}
  0 \\
  0 \\
  0 \\
  1 \\
  1
\end{array}\right)\left(\begin{array}{c}
  0 \\
  0 \\
  0 \\
  1 \\
  1
\end{array}\right)^T+\left(\begin{array}{c}
  1 \\
  0 \\
  0 \\
  0 \\
  1
\end{array}\right)\left(\begin{array}{c}
  1 \\
  0 \\
  0 \\
  0 \\
  1
\end{array}\right)^T,
\end{aligned}
\end{equation}
by  Dickinson's form (\ref{dickinsonform}), $A\in \text{int}(\mathcal{C}_5)$.
Moreover, the decomposition above is minimal (cf. \cite{Dickinson2010}).
We verify $A\in \text{int}(\mathcal{C}_5)$ by Algorithm \ref{Algorithmext}.
Choose $b_1=\mathbf{1}_{5}$. Then the identifying vector of $b_1b_1^T$ is $\bar b=\mathbf{1}_{15}$.
Choose
$$p_i=-e_1+e_{i+1}, \quad i=1,\ldots,14$$
to be basis vectors of span$\{\bar b\}^{\perp}$, and let
$$
p_0=(\bar b^T \bar b)^{-1} \bar b=\frac{1}{15}\cdot\mathbf{1}_{15}.
$$
Algorithm \ref{Algorithmext} terminates at $k=3$, with
$\lambda^{*,k}=1.0000>0$ and $y(\lambda^{*,k})\in \mathscr{R}_\mathcal{A}(K)$.
So, $A\in \text{int}(\mathcal{C}_5)$. We  obtain the CP-decomposition
$A =\sum_{i=1}^{5} \rho_i b_i b_i^T$ in Dickinson's form,
 where
\begin{eqnarray}\label{Exadur3}
&\rho_1=1.0000,  & b_1=(1.0000, 1.0000, 1.0000, 1.0000, 1.0000)^T, \nonumber \\
&\rho_2=2.0000,  &  b_2=(0.0000, 0.0000, 0.0000, 0.7071, 0.7071)^T, \nonumber \\
&\rho_3=8.0000, &  b_3=(0.0000, 0.0000, 0.7071, 0.7071, 0.0000)^T,  \\
&\rho_4=2.0000,  & b_4=(0.7071, 0.0000, 0.0000, 0.0000, 0.7071)^T, \nonumber\\
&\rho_5=2.0000,  & b_5=(0.0000, 0.7071, 0.7071, 0.0000, 0.0000)^T.\nonumber
\end{eqnarray}
The computed decomposition \eqref{Exadur3} is the same as \eqref{Exadur2}. We get a minimum CP-decomposition.

\eex


\end{document}